\documentclass[12pt]{article}

\usepackage{amsmath, epsfig, cite}
\usepackage{amssymb}
\usepackage{amsfonts}
\usepackage{latexsym}
\usepackage{amsthm}

\newtheorem{thm}{Theorem}[section]

\newtheorem{cor}[thm]{Corollary}

\newtheorem{lem}[thm]{Lemma}

\numberwithin{equation}{section}

\renewcommand{\thefootnote}{}

\begin{document}

\begin{center}
{\large\bf Some Dwork-type  $q$-supercongruences from a $_6\phi_5$
summation formula
 \footnote{ Corresponding author$^*$. Email addresses: weichuanan78@163.com (C. Wei), hmuqwang@163.com (Q. Wang)}}
\end{center}

\renewcommand{\thefootnote}{$\dagger$}

\vskip 2mm \centerline{Chuanan Wei, Qin Wang$^*$}
\begin{center}
{School of Intelligent Medicine and Technology\\ Hainan Medical
University, Haikou 571199, China}
\end{center}


\vskip 0.7cm \noindent{\bf Abstract.} With the help of a $_6\phi_5$
summation formula and Guo and Zudilin's method, we shall establish
some Dwork-type  $q$-supercongruences in this paper. When $q\to1$,
these $q$-supercongruences are able to engender the corresponding
supercongruences. One of them may be stated as follows: for any
prime $p\geq5$   and any positive integer $s$,
\begin{align*}
&\sum_{k=0}^{p^s-1}(6k-1)\frac{(-\frac{1}{3})_k^3}{(1)_k^3} \equiv
0\pmod{p^{3s}}.
\end{align*}

\vskip 3mm \noindent {\it Keywords}: $q$-supercongruence; basic
hypergeometric series; a $_6\phi_5$ summation formula; Guo and
Zudilin's method

 \vskip 0.2cm \noindent{\it AMS
Subject Classifications:} 33D15; 11A07; 11B65

\section{Introduction}
For a complex variable $x$ and a nonnegative integer $n$, define the
shifted-factorial by
\[(x)_{0}=1\quad \text{and}\quad (x)_{n}
=x(x+1)\cdots(x+n-1)\quad \text{when}\quad n\in\mathbb{Z}^{+}.\] In
1996, Van Hamme \cite[(H.2)]{Hamme} discovered the following nice
 supercongruence:
\begin{equation}
\sum_{k=0}^{(p-1)/2}\frac{(\frac{1}{2})_k^3}{k!^3}\equiv
\begin{cases} \displaystyle -\Gamma_p(\tfrac{1}{4})^4  \pmod{p^2}, &\text{if $p\equiv 1\pmod 4$,}\\[2mm]
 0\pmod{p^2}, &\text{if $p\equiv 3\pmod 4$.}
\end{cases}
\label{eq:hamme}
\end{equation}
Here and all over the paper, $p$ is an odd prime and $\Gamma_p(x)$
stands for the $p$-adic Gamma function. After several years, Long
and Ramakrishna \cite{LR} proved the generalization of
\eqref{eq:hamme}:
\begin{equation}
\sum_{k=0}^{(p-1)/2}\frac{(\frac{1}{2})_k^3}{k!^3}\equiv
\begin{cases} \displaystyle -\Gamma_p(\tfrac{1}{4})^4  \pmod{p^3}, &\text{if $p\equiv 1\pmod 4$,}\\[2mm]
 \displaystyle -\frac{p^2}{16}\Gamma_p(\tfrac{1}{4})^4\pmod{p^3}, &\text{if $p\equiv 3\pmod 4$.}
\end{cases}
\label{eq:long-a}
\end{equation}
 In the same paper, they gave also
 the interesting supercongruence:
\begin{equation}\label{eq:long-b}
\sum_{k=0}^{p-1}\frac{(\frac{1}{3})_k^3}{(1)_k^3}\equiv
\begin{cases} \displaystyle \Gamma_p\big(\tfrac{1}{3}\big)^6  \pmod{p^3}, &\text{if $p\equiv 1\pmod 6$,}\\[10pt]
\displaystyle
-\frac{p^2}{3}\Gamma_p\big(\tfrac{1}{3}\big)^6\pmod{p^3}, &\text{if
$p\equiv 5\pmod 6$.}
\end{cases}
\end{equation}
In 2022, the first author \cite{Wei-a} proposed the following
conjecture: for any prime $p$  and any positive integer $s$ such
that $p^s\equiv 1\pmod 6$,
\begin{align}\label{eq:wei}
&\sum_{k=0}^{(p^s+2)/3}\frac{(-\frac{2}{3})_k^3}{(1)_k^3} \equiv
0\pmod{p^{3s}}.
\end{align}
Hu and Wang \cite{Hu} confirmed this conjecture and provided the
following result: for any prime $p$  and any positive integer $s$
such that $p^s\equiv 1\pmod 6$,
\begin{align}\label{eq:hu}
&\sum_{k=0}^{p^s-1}\frac{(-\frac{2}{3})_k^3}{(1)_k^3} \equiv
0\pmod{p^{3s}}.
\end{align}
Subsequently, Guo \cite{Guo-2026} offered the supplementary formula:
for any prime $p$  and any positive integer $s$ such that $p^s\equiv
5\pmod 6$,
\begin{align}\label{eq:guo}
&\sum_{k=0}^{M}\frac{(-\frac{2}{3})_k^3}{(1)_k^3} \equiv
0\pmod{p^{3s}},
\end{align}
where $M=(2p^s+2)/3$ or $p^s-1$.

 For two complex numbers $x$ and $q$ and a nonnegative integer $n$, define the $q$-shifted factorial
 to be
 \begin{equation*}
(x;q)_{\infty}=\prod_{k=0}^{\infty}(1-xq^k)\quad\text{and}\quad
(x;q)_n=\frac{(x;q)_{\infty}}{(xq^n;q)_{\infty}}\quad
\text{when}\quad n\in\mathbb{Z}^{+}\cup\{0\}.
 \end{equation*}
For the aim of simplicity, we usually adopt the compact notation:
\begin{equation*}
(x_1,x_2,\dots,x_m;q)_{n}=(x_1;q)_{n}(x_2;q)_{n}\cdots(x_m;q)_{n},
 \end{equation*}
where $m\in\mathbb{Z}^{+}$ and $n\in\mathbb{Z}^{+}\cup\{0,\infty\}.$

In recent years, the research of $q$-supercongruences attracts a lot
of people. In 2019, Guo and Zudilin \cite{GuoZua} established the
following $q$-analogue of \eqref{eq:hamme}: for any positive odd
integer $n$,
\begin{align*}
&\sum_{k=0}^{(n-1)/2}\frac{(q;q^2)_k^2(q^2;q^4)_k}{(q^2;q^2)_k^2(q^4;q^4)_k}q^{2k}
\notag\\[5pt]
&\equiv
\begin{cases} \displaystyle\frac{(q^2;q^4)_{(n-1)/4}^2}{(q^4;q^4)_{(n-1)/4}^2}q^{(n-1)/2}\pmod{\Phi_n(q)^2}, &\text{if $n\equiv 1\pmod 4$,}
\\[2mm]
 \displaystyle 0\pmod{\Phi_n(q)^2}, &\text{if $n\equiv 3\pmod 4$.}
\end{cases}
\end{align*}
Here and throughout the paper,  $[r]$ denotes the $q$-integer
$(1-q^r)/(1-q)$ and $\Phi_n(q)$ reorensts the $n$-th cyclotomic
polynomial in $q$:
\begin{equation*}
\Phi_n(q)=\prod_{\substack{1\leqslant k\leqslant n\\
\gcd(k,n)=1}}(q-\zeta^k),
\end{equation*}
where $\zeta$ is an $n$-th primitive root of unity. Some years
later, Guo \cite[Theorem 1]{Guo-new} and the first author
\cite[Theorem 1.1]{Wei} provided the following generalization of Guo
and Zudilin's result, which is also a $q$-analogue of
\eqref{eq:long-a}: for any positive odd integer $n$,
\begin{align*}
&\sum_{k=0}^{(n-1)/2}\frac{(q;q^2)_k^2(q^2;q^4)_k}{(q^2;q^2)_k^2(q^4;q^4)_k}q^{2k}
\notag\\
 &\quad\equiv
\begin{cases} \displaystyle \frac{(q^2;q^4)_{(n-1)/4}^2}{(q^4;q^4)_{(n-1)/4}^2}q^{(n-1)/2}
A_q(n)\pmod{\Phi_n(q)^3}, &\text{if $n\equiv 1\pmod 4$,}\\[2mm]
 \displaystyle \frac{(q^3;q^4)_{(n-1)/2}}{(q^5;q^4)_{(n-1)/2}}[n]\pmod{\Phi_n(q)^3}, &\text{if $n\equiv 3\pmod 4$,}
\end{cases}
\end{align*}
where
$$A_q(n)=1+2[n]^2\sum_{i=1}^{(n-1)/4}\frac{q^{4i-2}}{[4i-2]^2}.$$
Wei, Liu, and Wang \cite{Wei-b} found the following $q$-analogue of
\eqref{eq:long-b}: modulo $\Phi_n(q)^3$,
\begin{equation*}
\sum_{k=0}^{n-1}\frac{(q;q^3)_k^3}{(q^3;q^3)_k^3}q^{3k}\equiv
\begin{cases} \displaystyle q^{(n-1)/3}\frac{(q^2;q^3)_{(n-1)/3}^2}{(q^3;q^3)_{(n-1)/3}^2}A_q(n), &\text{if $n\equiv 1\pmod 3$,}\\[10pt]
\displaystyle
q^{(2n-1)/3}\frac{(q^2;q^3)_{(2n-1)/3}^2}{(q^3;q^3)_{(2n-1)/3}^2}B_q(n),
&\text{if $n\equiv 2\pmod 3$,}
\end{cases}
\end{equation*}
where $n$ is a positive integer and
\begin{align*}
&A_q(n)=1+[n]^2\sum_{i=1}^{(n-1)/3}\frac{q^{3i-1}}{[3i-1]^2},
 \\[2mm]
 &B_q(n)=
1-[2n]^2\sum_{i=1}^{(2n-1)/3}\frac{q^{3i-1}}{[3i-1]^2}.
\end{align*}
A $q$-analogue of \eqref{eq:wei} and a $q$-analogue of \eqref{eq:hu}
can be seen in Hu and Wang \cite{Hu} and a $q$-analogue of
\eqref{eq:guo} can be seen in Guo \cite{Guo-2026}. Stronger
$q$-analogues of \eqref{eq:wei}-\eqref{eq:guo} due to Qin and Wang
\cite{Qin} may be stated as follows:
\begin{align*}
&\sum_{k=0}^{N}\frac{1+q^{3k-1}}{1+q^{-1}}\frac{(q^{-2};q^3)_k^3}{(q^3;q^3)_k^3}q^{6k}
\equiv0\pmod{[n]^3},
\end{align*}
where $n>3$ is an integer coprime with $6$ and $N$ is given by
\begin{equation*}
N=
\begin{cases} \displaystyle (n+2)/3\quad\text{or}\quad n-1, &\text{if $n\equiv 1\pmod 6$,}\\[10pt]
\displaystyle (2n+2)/3\quad\text{or}\quad n-1 , &\text{if $n\equiv
5\pmod 6$.}
\end{cases}
\end{equation*}
 For more $q$-analogues of supercongruences, we refer the reader to
the papers \cite{Guo-adb,Guo-a2,Guo-2025,Guo2023,LW}.

Motivated by the works just mentioned, we shall establish the
following three theorems.

\begin{thm}\label{thm-a}
Let $n\geq5$ be an integer coprime with $6$. Then
\begin{align}\label{eq:wei-a}
&\sum_{k=0}^{n-1}[6k-1]\frac{(q^{-1};q^3)_k^3}{(q^3;q^3)_k^3}q^{3k}
\equiv0\pmod{[n]^3}.
\end{align}
\end{thm}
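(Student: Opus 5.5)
Following Guo and Zudilin's creative microscoping, we introduce an extra parameter $a$ and prove the stronger result
\begin{align*}
\sum_{k=0}^{n-1}[6k-1]\frac{(q^{-1},aq^{-1},q^{-1}/a;q^3)_k}{(q^3,q^3/a,aq^3;q^3)_k}q^{3k}
\equiv 0 \pmod{[n](1-aq^{n})(a-q^{n})}.
\end{align*}
Setting $a=1$ then gives modulus $[n](1-q^n)^2$. For $n$ coprime with $6$ this factor contains $\Phi_d(q)^3$ for every divisor $d>1$ of $n$, so it is divisible by $[n]^3$ up to the harmless extra factors. One must also check that the denominators $(q^3;q^3)_k^3$ for $k\le n-1$ do not spoil the $\Phi_d(q)$-adic valuation. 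This is the standard bookkeeping, because $(q^{-1};q^3)_k$ supplies matching cyclotomic factors in the numerator.

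\textbf{Step 1 (modulus $(1-aq^n)(a-q^n)$).} Put $a=q^{-n}$ (the case $a=q^n$ is symmetric). Since $n\equiv\pm1\pmod 3$, one of the factors $(aq^{-1};q^3)_k$ or $(q^{-1}/a;q^3)_k$, say $(q^{-1-n};q^3)_k$ when $n\equiv 1 \pmod 3$, terminates at $k=(n+1)/3$ (or the analogous index when $n\equiv 2\pmod 3$). The sum therefore truncates naturally and becomes a terminating very-well-poised series. Write it as a ${}_6\phi_5$ in base $q^3$ with parameters $A=q^{-2}$ (so that $q^{3k}\sqrt{A}$ produces the factor $1-q^{6k-1}$), $b=q^{-1}$, $c=aq^{-1}$, $d=q^{-1}/a$. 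The well-poised condition requires denominators $Aq^3/b=q^{2}$ and so on. These do not literally match $q^3$, so I will first rewrite $(q^{-1};q^3)_k/(q^3;q^3)_k$ or shift the indexing so that the summand has the exact ${}_6\phi_5$ form. Then Jackson's terminating ${}_6\phi_5$ summation evaluates it as a ratio of $q$-shifted factorials. That ratio contains a factor such as $(q^{3};q^3)_{m}$ or $(q^{2-n};q^3)$-type terms that vanishes, giving $0$. Hence the parametric sum is divisible by $1-aq^n$ and $a-q^n$. The terms with $k$ beyond the truncation point up to $n-1$ vanish because the numerator factor is zero.

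\textbf{Step 2 (modulus $\Phi_n(q)$).} Pair the terms $k$ and $n'-k$ for a suitable $n'$, and show that the summand satisfies $c_q(k)\equiv -c_q(n'-k)\pmod{\Phi_n(q)}$ using $q^n\equiv1$. This is the usual reflection lemma for $(q^{-1};q^3)_k/(q^3;q^3)_k$ with $a$-dependent factors. Alternatively, it follows directly from the $q$-WZ-free telescoping: for $\Phi_n$ it suffices to note that $[6k-1]$ flips sign under the pairing. The leftover terms vanish since their numerator contains $1-q^{n}$ and no denominator factor is divisible by $\Phi_n(q)$. The same argument applied to each divisor $d\mid n$ gives the factor $[n]$.

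\textbf{Main obstacle.} The expected difficulty is matching the sum to the paper's ${}_6\phi_5$ formula with the correct parameters in both residue classes $n\equiv1,5\pmod 6$, and identifying which product in the closed form vanishes at $a=q^{\pm n}$. I would first try $A=q^{-2}$, $b=q^{-1}$, $c=aq^{-1}$, $d=q^{-1}/a$ in base $q^3$. If the denominators mismatch, I would instead apply the summation to the series with $q^{-1}$ replaced by $q^{-2}$-shifted parameters, treating $[6k-1]$ as $1-Aq^{6k}$ up to normalization.
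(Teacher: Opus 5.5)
\textbf{Main gap: the specialization $a=1$ fails when $n$ is composite.} Your plan breaks down at exactly the step you call ``standard bookkeeping.'' Vanishing at $a=q^{\pm n}$ together with the $[n]$-congruence gives only
$S(a)=[n](1-aq^n)(a-q^n)H(a)/D(a)$, where $H$ is a Laurent polynomial. The denominator $D(a)$ is, up to powers of $a$ and $a$-free factors prime to $[n]$, equal to $(aq^3,q^3/a;q^3)_{n-1}$.

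At $a=1$ this denominator becomes $(q^3;q^3)_{n-1}^2$. Since $\Phi_d(q)\mid 1-q^{3i}$ iff $d\mid i$ (because $\gcd(d,3)=1$), it contains $\Phi_d(q)^{2(n/d-1)}$ for every divisor $1<d<n$ of $n$. The modulus $[n](1-q^n)^2$ supplies only $\Phi_d(q)^3$. Since $n/d\ge5$, you gain nothing beyond the single $\Phi_d(q)$ coming from $[n]$.

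Your remark that $(q^{-1};q^3)_k$ cancels denominators only shows that each summand at $a=1$ is $\Phi_d$-integral. What you actually need is $\Phi_d$-integrality at $a=1$ of the quotient $S(a)/\bigl((1-aq^n)(a-q^n)\bigr)$, and termwise cancellation does not survive that division. So your argument proves at most the congruence modulo $[n]\Phi_n(q)^2$. That equals $[n]^3$ only for prime $n$, and it does not cover $n=p^s$ with $s\ge2$.

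The missing idea, which the paper supplies, is to enlarge the modulus separately for each divisor $m>1$ of $n$:
\begin{itemize}
\item Show the parametric sum vanishes at all $a=q^{\pm(3j+1)m}$ if $m\equiv5\pmod6$, or at all $a=q^{\pm(3j+2)m}$ if $m\equiv1\pmod6$, for $0\le j\le n/m-1$.
\item Each such specialization terminates at $k=((3j+1)m+1)/3$ (resp.\ $((3j+2)m+1)/3$), which is at most $n-1$.
\item Each is killed by the factor $(q^{2-(3j+1)m};q^3)_\infty$ (resp.\ $(q^{2-(3j+2)m};q^3)_\infty$) in the ${}_6\phi_5$ evaluation.
\item These $2n/m$ linear factors in $a$ are pairwise coprime, coprime to $[n]$, and coprime to the denominators, because their exponents are prime to $3$.
\item At $a=1$ they give $\Phi_m(q)^{2n/m}$. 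This is exactly two more than the $\Phi_m(q)^{2n/m-2}$ in $(q^3;q^3)_{n-1}^2$, so together with $[n]$ you get $\Phi_m(q)^3$.
\end{itemize}

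\textbf{Step 1 is also wrong as written.} For $n\equiv1\pmod6$ the specialization $a=q^{\pm n}$ does not terminate the series, since $-1\mp n\not\equiv0\pmod3$; note that $(n+1)/3$ is not even an integer there. You have swapped the residue classes: $(q^{-1-n};q^3)_k$ terminates at $(n+1)/3$ when $n\equiv2\pmod3$. For $n\equiv1\pmod6$ there is no reason for the sum to vanish at $a=q^{\pm n}$. The correct points are $a=q^{\pm2n}$ (the case $m=n$, $j=0$ above), which terminate at $k=(2n+1)/3$; their value $(1-q^{2n})^2$ at $a=1$ still contains $\Phi_n(q)^2$.

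Your ${}_6\phi_5$ parameters are also off. The exact match is base $q^3$ with $A=q^{-1}$ (not $q^{-2}$), $b=aq^{-1}$, $c=q^{-1}/a$, $d=q$. Then:
\begin{itemize}
\item $(1-Aq^{6k})/(1-A)=-q[6k-1]$;
\item the ratio $(d;q^3)_k/(Aq^3/d;q^3)_k$ equals $1$;
\item the remaining denominators are $Aq^3/b=q^3/a$ and $Aq^3/c=aq^3$;
\item the argument is $Aq^3/(bcd)=q^3$.
\end{itemize}
So the summand matches the ${}_6\phi_5$ term exactly, with no reindexing needed.
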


\begin{thm}\label{thm-b}
Let $n$ be a positive integer such that $n\equiv 5\pmod 6$. Then
\begin{align}\label{eq:wei-b}
&\sum_{k=0}^{(n+1)/3}[6k-1]\frac{(q^{-1};q^3)_k^3}{(q^3;q^3)_k^3}q^{3k}
\equiv0\pmod{[n]^3}.
\end{align}
\end{thm}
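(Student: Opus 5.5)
The plan is to follow Guo and Zudilin's ``creative microscoping'' method. I would first embed the left-hand side of \eqref{eq:wei-b} into a one-parameter family, where a free parameter $a$ splits one of the three factors $(q^{-1};q^3)_k$. A natural candidate is
\begin{align*}
S_n(a)=\sum_{k=0}^{(n+1)/3}[6k-1]\frac{(q^{-1},aq^{-1},q^{-1}/a;q^3)_k}{(q^3,q^3/a,aq^3;q^3)_k}\,q^{3k}.
\end{align*}
It reduces to \eqref{eq:wei-b} at $a=1$. It is well-poised in base $q^3$ with first parameter $q^{-1}$, and the factor $[6k-1]$ is, up to a constant, the very-well-poised factor $(1-q^{-1}q^{6k})/(1-q^{-1})$. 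I have not checked that the $q^{3k}$ weight matches the argument of the paper's $_6\phi_5$. The classical argument here would be $q^5$, so this matching may need a different normalisation or a $q\to q^{-1}$ reflection.

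The key observation is that $n\equiv5\pmod6$ makes $(n+1)/3$ an integer. For $a=q^{-n}$ the factor $(aq^{-1};q^3)_k=(q^{-1-n};q^3)_k$ vanishes for $k>(n+1)/3$. Hence $S_n(q^{-n})$ is exactly a terminating very-well-poised $_6\phi_5$, and the $_6\phi_5$ summation formula gives it in closed form. By the symmetry $a\leftrightarrow a^{-1}$ of $S_n(a)$, the value at $a=q^{n}$ is the same.

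I then expect the closed-form product to contain a factor such as $(q^{2};q^3)_{(n+1)/3}$ or $(q^{-1};q^3)_{(n+1)/3}$ in the numerator, without a matching cancellation in the denominator. Such a factor would make $S_n(q^{\pm n})\equiv0\pmod{\Phi_n(q)}$, indeed modulo every $\Phi_d(q)$ with $d\mid n$, $d>1$. Since $S_n(a)$ is a polynomial-type expression in $a$ whose denominators $(aq^3,q^3/a;q^3)_k$ are coprime to $1-aq^n$ and $a-q^n$ for $k\le(n+1)/3$, this yields
\begin{align*}
S_n(a)\equiv0\pmod{[n](1-aq^n)(a-q^n)}.
\end{align*}
To get the extra factor $[n]$ (as opposed to $[n]$ only up to the two $a$-factors), I would check the $a$-free modulus separately. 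One route is to show $S_n(a)\equiv0\pmod{\Phi_d(q)}$ for each $d\mid n$, $d>1$, by pairing the $k$-th and the ``reflected'' terms modulo $\Phi_d(q)$, as in Guo's and Qin--Wang's proofs of \eqref{eq:guo}. The other route is simply to read the factor off the closed form if the evaluation already supplies it. Setting $a=1$ turns $(1-aq^n)(a-q^n)$ into $(1-q^n)^2$, which gives \eqref{eq:wei-b} modulo $[n]^3$.

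The main obstacle is making the parametric family compatible with the specific $_6\phi_5$ formula, that is, getting the argument $q^{3k}$ right. If the classical very-well-poised sum does not fit, I would try placing the parameter differently, for instance $(q^{-1};q^3)_k^2(q^{-1-n};q^3)_k$-type truncations, or transforming via $q\mapsto q^{-1}$. A second delicate point is upgrading from $\Phi_n(q)^3$ to $[n]^3$ for composite $n$. For each divisor $d$ I would track that the evaluation still vanishes to the third order at primitive $d$-th roots of unity: one order from the closed form and two from the $(1-aq^n)(a-q^n)$ factor at $a=1$. Checking that no denominator $(q^3;q^3)_k$ with $k\le(n+1)/3$ cancels these orders needs the restriction $k\le(n+1)/3<d$-type bounds, handled divisor by divisor.
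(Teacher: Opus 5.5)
\textbf{There is a genuine gap.} Your approach is the paper's, but the single specialization $a=q^{\pm n}$ only gives the result modulo $\Phi_n(q)^3$. It cannot deliver $[n]^3$ when $n$ is composite.

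\textbf{The first step works, but you need exact vanishing.} Your family $S_n(a)$ and the specialization $a=q^{\pm n}$ are exactly the paper's, and your worry about the argument is unfounded. In base $q^3$ take $A=q^{-1}$, $b=q^{n-1}$, $c=q^{-1-n}$, $d=q$. Then $(d;q^3)_k/(Aq^3/d;q^3)_k=1$ and $Aq^3/(bcd)=q^3$ exactly. The step genuinely needs $S_n(q^{\pm n})=0$, and this does hold. The terminating evaluation is $(q^2,q^{2-n};q^3)_{(n+1)/3}/(q,q^{3-n};q^3)_{(n+1)/3}$, and the last factor of $(q^{2-n};q^3)_{(n+1)/3}$ is $1-q^0=0$. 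Your stated expectation, $S_n(q^{\pm n})\equiv0\pmod{\Phi_n(q)}$, would not imply that $S_n(a)$ is divisible by $(1-aq^n)(a-q^n)$. Also, the third order at $a=1$ comes from the $[n]$-congruence, not from the closed form.

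\textbf{The gap: passing from $\Phi_n(q)^3$ to $[n]^3$.} With only the pair $a=q^{\pm n}$, setting $a=1$ turns the denominators $(q^3/a,aq^3;q^3)_k$ into $(q^3;q^3)_k^2$. For a divisor $1<d<n$ of $n$, this contains $\Phi_d(q)^{2\lfloor k/d\rfloor}$. Every proper divisor satisfies $d\le n/5<(n+1)/3$, so your hoped-for bound ``$k\le(n+1)/3<d$'' never holds. For example, with $n=35$ and $d=5$, the denominator carries $\Phi_5(q)^4$ against only $\Phi_5(q)^3$ in $[n](1-q^n)^2$, so you get nothing for $\Phi_5$. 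As written, your argument proves the theorem only for prime $n$.

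\textbf{The missing idea: use many more specializations.} For any $N\equiv2\pmod 3$ with $N\le n$, at $a=q^{\pm N}$ the truncated sum is again a complete terminating ${}_6\phi_5$. It ends at $k=(N+1)/3\le(n+1)/3$, and it vanishes for the same reason as before. For each divisor $m>1$ of $n$, the paper takes all such $N$ that are multiples of $m$:
\begin{itemize}
\item if $m\equiv5\pmod6$: $N=(3j+1)m$ for $0\le j\le(n-m)/(3m)$;
\item if $m\equiv1\pmod6$: $N=(3j+2)m$ for $0\le j\le(n-2m)/(3m)$.
\end{itemize}
This gives $S_n(a)\equiv0$ modulo $[n]\prod_j(1-aq^N)(a-q^N)$. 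At $a=1$ this modulus carries $\Phi_m(q)^{(2n+7m)/(3m)}$, respectively $\Phi_m(q)^{(2n+5m)/(3m)}$. The denominator carries only $\Phi_m(q)^{(2n-2m)/(3m)}$, respectively $\Phi_m(q)^{(2n-4m)/(3m)}$. The surplus is exactly $3$ for every $m$, and taking the product over all divisors $m>1$ yields $[n]^3$.
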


\begin{thm}\label{thm-c}
Let $n\geq7$ be an integer such that $n\equiv 1\pmod 6$. Then
\begin{align}\label{eq:wei-c}
&\sum_{k=0}^{(2n+1)/3}[6k-1]\frac{(q^{-1};q^3)_k^3}{(q^3;q^3)_k^3}q^{3k}
\equiv0\pmod{[n]^3}.
\end{align}
\end{thm}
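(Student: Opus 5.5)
The plan is to use Guo and Zudilin's creative microscoping with the terminating very-well-poised $_6\phi_5$ summation in base $q^3$:
\[
\sum_{k=0}^{N}\frac{1-aq^{6k}}{1-a}\frac{(a,b,c,q^{-3N};q^3)_k}{(q^3,aq^3/b,aq^3/c,aq^{3N+3};q^3)_k}\Big(\frac{aq^{3N+3}}{bc}\Big)^k
=\frac{(aq^3,aq^3/(bc);q^3)_N}{(aq^3/b,aq^3/c;q^3)_N}.
\]
I take $a=q^{-1}$, which is forced by the factor $[6k-1]$: indeed $1-q^{6k-1}=(1-q)[6k-1]$. The cube $(q^{-1};q^3)_k^3/(q^3;q^3)_k^3$ then has to be deformed by an extra parameter, call it $x$ to avoid a clash with $a$.

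\textbf{Choice of the deformation.} Since $n\equiv1\pmod 6$, we have $(2n+1)/3\in\mathbb{Z}$ and $(q^{-1-2n};q^3)_k=0$ for $k>(2n+1)/3$. This suggests the specialization
\[
b=q^{-1-2n}\ \text{or}\ q^{-3N}\ \text{with}\ N=(2n+1)/3,
\]
together with the two remaining slots equal to $q^{-1}x$ and $q^{-1}/x$. The point is that at $x=q^{\pm n}$ one of the slots becomes a terminating factor of the same length. The resulting deformed sum is
\[
S(x)=\sum_{k=0}^{(2n+1)/3}[6k-1]\frac{(q^{-1},q^{-1}x,q^{-1}/x;q^3)_k}{(q^3,q^3/x,q^3x;q^3)_k}\,q^{3k}\cdot(\text{correction}).
\]
Here the correction factor must be chosen so that the $_6\phi_5$ argument $aq^{3N+3}/(bc)$ collapses to $q^3$. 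I would first verify this bookkeeping of exponents.

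\textbf{Steps in order.}
\begin{enumerate}
\item Evaluate $S(q^{\pm n})$ in closed form by the $_6\phi_5$ formula. Show that the right-hand side contains a factor $(q^{2-n};q^3)_N$ or $(q^{2+n}\cdots)$ that vanishes modulo $\Phi_n(q)$, or more strongly that it is divisible by $[n]$. This gives $S(x)\equiv 0 \pmod{[n](1-xq^n)(x-q^n)}$.
\item Separately check, by the usual pairing $k\leftrightarrow (2n+1)/3-k$ or by the denominators $(q^3x,q^3/x;q^3)_k$, that no denominator is divisible by $(1-xq^n)(x-q^n)$ in the range of summation. This is what makes the polynomial congruence legitimate.
\item Set $x=1$. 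The factor $(1-q^n)^2$ combines with $[n]$ to give divisibility by $\Phi_n(q)^3$.
\end{enumerate}

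\textbf{Passing from $\Phi_n(q)^3$ to $[n]^3$ (main obstacle).} Since $[n]^3=\prod_{d\mid n,\,d>1}\Phi_d(q)^3$, I need the congruence modulo $\Phi_d(q)^3$ for every divisor $d>1$ of $n$. Such a $d$ may be $\equiv1$ or $\equiv5\pmod 6$, and the truncation point $(2n+1)/3$ is not the natural one for $d$. My approach is as follows.
\begin{itemize}
\item Write the sum over $0\le k\le(2n+1)/3$ as a sum of blocks of length $d$.
\item Use the $q$-Lucas-type factorization of $(q^{-1};q^3)_k^3/(q^3;q^3)_k^3$ at $d$-th roots of unity to reduce each block to the complete sum $\sum_{k=0}^{d-1}$. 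That complete sum is $\equiv0\pmod{\Phi_d(q)^3}$ by Theorem~\ref{thm-a}.
\item Handle the final partial block with Theorem~\ref{thm-b} when $d\equiv5\pmod 6$, and with the $d$-version of the present argument when $d\equiv 1\pmod 6$.
\item Verify that the terms with $k$ between the natural truncation point for $d$ and the next multiple of $d$ are individually divisible by $\Phi_d(q)^3$. Each such term has three numerator factors $1-q^{3j-1}$ divisible by $\Phi_d(q)$ and no compensating denominator factor.
\end{itemize}
I expect this last verification, together with keeping track of the $q$-power weights across blocks, to be the main technical difficulty.
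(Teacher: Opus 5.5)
\textbf{The gap: proper divisors of $n$.} Your argument breaks when passing from $\Phi_n(q)^3$ to $\Phi_d(q)^3$ for the proper divisors $d>1$ of $n$, and a $q$-Lucas block reduction cannot close it, because it only holds modulo $\Phi_d(q)$.

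Write $k=d\ell+r$. The prefactor $(q^{-1};q^3)_{d\ell}^3/(q^3;q^3)_{d\ell}^3$ has $\Phi_d$-valuation $0$, so it contributes nothing. Each block still carries the shifts in $(q^{-1+3d\ell};q^3)_r^3$, $(q^{3+3d\ell};q^3)_r^3$ and $[6d\ell+6r-1]$. Since $1-q^{3d\ell}$ is divisible by $\Phi_d(q)$ only to the first power, a shifted block agrees with the unshifted complete sum only modulo $\Phi_d(q)$. So Theorem~\ref{thm-a}, and Theorem~\ref{thm-b} for the tail, cannot be transported to the blocks modulo $\Phi_d(q)^3$. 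What you call bookkeeping is exactly where the argument fails: the discrepancies are of order $\Phi_d(q)^1$, and nothing in those theorems controls them.

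Your single evaluation point does not rescue this either. At $x=1$ the factor $(1-q^{2n})^2$ does contain $\Phi_d(q)^2$. But the denominator $(q^3x,q^3/x;q^3)_N$, with $N=(2n+1)/3$, contributes $\Phi_d(q)^{2\lfloor N/d\rfloor}$ at $x=1$, which swallows it.

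\textbf{The missing idea.} The paper uses many evaluation points from the same summation, one family per divisor:
\begin{itemize}
\item for $d\equiv5\pmod 6$, take $x=q^{\pm(3j+1)d}$ with $0\le j\le (2n-d)/(3d)$;
\item for $d\equiv1\pmod 6$, $d>1$, take $x=q^{\pm(3j+2)d}$ with $0\le j\le (2n-2d)/(3d)$.
\end{itemize}
At each such point the sum terminates at $k=((3j+1)d+1)/3$, resp.\ $((3j+2)d+1)/3$, which is $\le N$, and the ${}_6\phi_5$ sum vanishes exactly. The factors $(1-xq^{e})(x-q^{e})$ are pairwise coprime, coprime to $[n]$, and coprime to the $x$-denominators since $3\nmid e$. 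Hence the deformed sum is $\equiv 0$ modulo their product times $[n]$. At $x=1$ this gives $\Phi_d(q)^{(4n+7d)/(3d)}$, resp.\ $\Phi_d(q)^{(4n+5d)/(3d)}$, against $\Phi_d(q)^{(4n-2d)/(3d)}$, resp.\ $\Phi_d(q)^{(4n-4d)/(3d)}$, from the denominator. That leaves exactly $\Phi_d(q)^3$. This handles every divisor uniformly and needs neither blocks nor Theorems~\ref{thm-a} and~\ref{thm-b}.

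\textbf{Corrections to the $\Phi_n(q)^3$ step.} This step is otherwise the $m=n$, $j=0$ case of Lemma~\ref{lemma-f}, but two things need fixing.
\begin{itemize}
\item $x=q^{\pm n}$ is the wrong point. Since $n+1\not\equiv 0\pmod 3$, $(q^{-1-n};q^3)_k$ never vanishes and nothing terminates. Use $x=q^{\pm 2n}$, so that one of $q^{-1}x,\,q^{-1}/x$ equals $q^{-3N}$. Take the remaining parameter $c=q$; then $(c;q^3)_k/(aq^3/c;q^3)_k=1$, the argument is exactly $q^3$, and no correction factor is needed. At $x=q^{2n}$ the right-hand side contains $(q^2/x;q^3)_N=(q^{3-3N};q^3)_N=0$.
\item The factor $[n]$ cannot be read off from the closed form at special points. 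It must come from a separate congruence $S(x)\equiv 0\pmod{[n]}$ for indeterminate $x$; the paper quotes Guo and Schlosser's Lemma 2.2, a reflection argument. The exact vanishing then supplies $(1-xq^{2n})(x-q^{2n})$.
\end{itemize}
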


Setting $n=p^s$ and taking $q\to 1$ in Theorems
\ref{thm-a}-\ref{thm-c}, we catch hold of the following conclusions.

\begin{cor}\label{cor-a}
Let $p\geq5$ be a prime and $s$ a positive integer. Then
\begin{align*}
&\sum_{k=0}^{p^s-1}(6k-1)\frac{(-\frac{1}{3})_k^3}{(1)_k^3} \equiv
0\pmod{p^{3s}}.
\end{align*}
\end{cor}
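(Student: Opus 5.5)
The plan is to obtain Corollary \ref{cor-a} directly from Theorem \ref{thm-a} by specializing $n=p^s$ and letting $q\to1$. Since $p\geq5$ is prime, $n=p^s$ is coprime with $6$ and satisfies $n\geq5$, so Theorem \ref{thm-a} applies and gives
\begin{align*}
\sum_{k=0}^{p^s-1}[6k-1]\frac{(q^{-1};q^3)_k^3}{(q^3;q^3)_k^3}q^{3k}
\equiv0\pmod{[p^s]^3}.
\end{align*}

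The first step is to record the factorization $[p^s]=\prod_{j=1}^{s}\Phi_{p^j}(q)$. Note that $\Phi_{p^j}(1)=p$ for each $j$. The congruence modulo $[p^s]^3$ therefore means the following: the left-hand side, a rational function of $q$, equals $[p^s]^3$ times a rational function $R(q)$ whose denominator is coprime to each $\Phi_{p^j}(q)$.

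The second step is to check that $R(q)$ specializes well at $q=1$. The denominators of the summands are products of $(q^3;q^3)_k^3$ with $k\le p^s-1$. The cyclotomic factors of these are $\Phi_d(q)$ with $d\mid 3i$ and $i<p^s$.

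The main point needing care is that such a $d$ never equals $p^j$ with $j>s$. It may equal $p^j$ with $j\le s$ (these are absorbed into the $q$-congruence as usual), or a $d$ not a prime power. If $d$ is not a prime power, then $\Phi_d(1)=1$, so this factor is harmless. If $d$ is a power of another prime $\ell$, then $\Phi_d(1)=\ell$, which is a $p$-adic unit. Hence the denominator of $R(q)$ evaluated at $q=1$ is coprime to $p$. This is the standard argument in Guo--Zudilin-type derivations, and I expect it to be the only delicate point.

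Finally, let $q\to1$. We have $[6k-1]\to 6k-1$, $q^{3k}\to1$, and
\begin{align*}
\frac{(q^{-1};q^3)_k}{(q^3;q^3)_k}\to\frac{(-\frac13)_k}{(1)_k},
\end{align*}
since $(1-q^{3i-1})/(1-q^{3i})\to(3i-1)/(3i)$. Meanwhile $[p^s]^3\to p^{3s}$, and $R(1)$ is a $p$-adic integer. Therefore the sum $\sum_{k=0}^{p^s-1}(6k-1)(-\frac13)_k^3/(1)_k^3$ is $p^{3s}$ times a $p$-integral rational number, which is the claimed congruence modulo $p^{3s}$.
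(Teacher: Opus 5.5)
Your proposal is correct and follows the paper's route exactly: the paper obtains Corollary~\ref{cor-a} by setting $n=p^s$ in Theorem~\ref{thm-a} and letting $q\to1$, and you have written out the standard justification, notably that no $\Phi_{p^j}(q)$ with $j>s$ can divide a $[3i]$ with $i\le p^s-1$, since $3(p^s-1)<p^{s+1}$. Two cosmetic fixes are needed: your case split misses $d=1$, where $\Phi_1(1)=0$, but those factors cancel termwise once you write $(q^{-1};q^3)_k/(q^3;q^3)_k=\prod_{i=1}^{k}[3i-4]/[3i]$; this is also the correct pairing, with factors tending to $(3i-4)/(3i)$ rather than $(3i-1)/(3i)$, since the latter would produce $(\frac{2}{3})_k/(1)_k$.
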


\begin{cor}\label{cor-b}
Let $p$ be a prime and $s$ a positive integer such that $p^s\equiv
5\pmod 6$. Then
\begin{align*}
&\sum_{k=0}^{(p^s+1)/3}(6k-1)\frac{(-\frac{1}{3})_k^3}{(1)_k^3}
\equiv 0\pmod{p^{3s}}.
\end{align*}
\end{cor}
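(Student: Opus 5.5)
The statement is Corollary~\ref{cor-b}, and I would deduce it directly from Theorem~\ref{thm-b} by setting $n=p^s$ and letting $q\to1$. The hypothesis $p^s\equiv 5\pmod 6$ is exactly the condition $n\equiv5\pmod 6$ required there. Under it Theorem~\ref{thm-b} gives
\[
\sum_{k=0}^{(p^s+1)/3}[6k-1]\frac{(q^{-1};q^3)_k^3}{(q^3;q^3)_k^3}q^{3k}\equiv 0\pmod{[p^s]^3}.
\]
The left-hand side is a rational function of $q$. Every denominator factor $1-q^{3j}$ with $1\le j\le (p^s+1)/3$ has $3j\le p^s+1<2p^s$, and $3j\ne p^s$ because $3\nmid p^s$. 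Hence none of these factors is divisible by $\Phi_{p^t}(q)$ for $1\le t\le s$: such divisibility would force $p^t\mid 3j$, and this fails for $t=s$ and needs separate care for $t<s$.

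The cleanest route avoids tracking individual cyclotomic factors. Since $[p^s]=\prod_{t=1}^{s}\Phi_{p^t}(q)$, the congruence means that the sum can be written as $[p^s]^3\,A(q)/B(q)$, where $A,B\in\mathbb{Z}[q]$ and $B(q)$ is coprime to $[p^s]$. This is how such congruences are interpreted in the paper. I would then check that $B(1)$ is not divisible by $p$.

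Concretely, $B(q)$ can be taken to be the product of the cyclotomic factors $\Phi_d(q)$ with $d\nmid p^s$ occurring in $(q^3;q^3)_{(p^s+1)/3}^3$. For $d>1$ we have $\Phi_d(1)=1$ unless $d$ is a prime power $\ell^a$, in which case $\Phi_d(1)=\ell$. The only danger is therefore $d=p^a$. Any factor $\Phi_{p^a}$ with $a\le s$ divides $[p^s]$, so it has been absorbed into the modulus and is excluded from $B$. A factor $\Phi_{p^a}$ with $a>s$ would need $p^a\mid 3j\le p^s+1$, which is impossible. Thus $B(1)$ is coprime to $p$.

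Letting $q\to1$ then gives $[6k-1]\to 6k-1$ and $[p^s]\to p^s$. Also
\[
\frac{(q^{-1};q^3)_k}{(q^3;q^3)_k}\to\prod_{i=0}^{k-1}\frac{3i-1}{3i+3}=\frac{(-\frac13)_k}{(1)_k},
\]
and $q^{3k}\to1$. So the sum becomes $\sum_{k=0}^{(p^s+1)/3}(6k-1)(-\frac13)_k^3/(1)_k^3$, which equals $p^{3s}A(1)/B(1)$ with $p\nmid B(1)$. This is $0\pmod{p^{3s}}$ as a $p$-adic congruence of rationals. The only step needing care is the denominator bookkeeping just described, namely that no factor $\Phi_{p^a}$ survives in $B$. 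Everything else follows from Theorem~\ref{thm-b}, which is taken as given.
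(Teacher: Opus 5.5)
Your proposal is correct and is the paper's own argument: put $n=p^s$ in Theorem~\ref{thm-b} and let $q\to1$. Your check that the reduced denominator $B(q)$ contains no $\Phi_{p^a}(q)$ (so $p\nmid B(1)$) makes explicit a step the paper leaves unstated; two small points: $\Phi_1(q)$ stays out of $B$ not because it divides $[p^s]$ (it does not) but because it cancels termwise, as $(q^{-1};q^3)_k/(q^3;q^3)_k=\prod_{i=0}^{k-1}[3i-1]/[3i+3]$, and your opening claim that no $1-q^{3j}$ is divisible by $\Phi_{p^t}(q)$ is false for $t<s$ (take $j=p$ when $s\ge2$), though the argument you actually rely on does not use it.
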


\begin{cor}\label{cor-c}
Let $p$ be a prime and $s$ a positive integer such that  and
$p^s\equiv 1\pmod 6$. Then
\begin{align*}
&\sum_{k=0}^{(2p^s+1)/3}(6k-1)\frac{(-\frac{1}{3})_k^3}{(1)_k^3}
\equiv 0\pmod{p^{3s}}.
\end{align*}
\end{cor}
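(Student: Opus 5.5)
The plan is to specialize Theorem~\ref{thm-c} at $n=p^s$ and let $q\to1$. Since $p^s\equiv1\pmod 6$ forces $p\ge7$, we have $n=p^s\ge7$ and $n\equiv1\pmod 6$, so Theorem~\ref{thm-c} applies. Write
\[
S_n(q)=\sum_{k=0}^{(2n+1)/3}[6k-1]\frac{(q^{-1};q^3)_k^3}{(q^3;q^3)_k^3}q^{3k}.
\]
The ingredients at $q=1$ are routine: $[6k-1]\to 6k-1$, and $(q^{-1};q^3)_k/(q^3;q^3)_k$ tends termwise to $\prod_{j=0}^{k-1}\frac{3j-1}{3(j+1)}=(-\tfrac13)_k/(1)_k$. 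Hence $S_n(1)$ is exactly the sum in Corollary~\ref{cor-c}, and $[n]^3\to p^{3s}$.

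The only real point is making the limit legitimate $p$-adically. The congruence modulo $[n]^3$ says $S_n(q)=[n]^3A(q)/B(q)$ with $A,B$ polynomials. The denominators of the summands contain cyclotomic factors $\Phi_{p^a}(q)$ with $a<s$, and $\Phi_{p^a}(1)=p$. So we must make sure that no factor $\Phi_d(q)$ with $d$ a power of $p$ survives in the denominator of $S_n(q)$.

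I would show that each ratio $(q^{-1};q^3)_k/(q^3;q^3)_k$ has no such factor in lowest terms, by a multiplicity count. Fix $d$ coprime to $3$.
\begin{itemize}
\item $\Phi_d(q)$ divides $1-q^{3j}$ iff $d\mid j$, so it occurs in the denominator $\prod_{j=1}^{k}(1-q^{3j})$ exactly $\lfloor k/d\rfloor$ times.
\item $\Phi_d(q)$ divides $1-q^{3j-1}$ iff $3j\equiv1\pmod d$, a single residue class of $j$ modulo $d$. Among the $k$ consecutive integers $j=0,\dots,k-1$ this class appears at least $\lfloor k/d\rfloor$ times, so the numerator $\prod_{j=0}^{k-1}(1-q^{3j-1})$ contains $\Phi_d(q)$ at least as often.
\end{itemize}
Factors of the form $1-q^{3j-1}$ with $3j-1<0$ differ only by a unit $-q^{3j-1}$, so they do not affect the count. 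Hence the only cyclotomic factors that can remain in the denominator are $\Phi_d(q)$ with $3\mid d$. Since $\Phi_d(1)$ is either $1$ or a prime dividing $d$, for such $d$ the value $\Phi_d(1)$ is $1$ or a prime dividing $d$, and it is nonzero.

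Consequently every summand, and hence $S_n(q)$, lies in $\mathbb{Z}[q]$ localized at $\prod_{3\mid d}\Phi_d(q)$. We may therefore take $B(q)$ to be a product of such $\Phi_d(q)$, with $A(q)\in\mathbb{Z}[q]$. To get a sharper statement I would take $B$ to be $\prod_{k}(q^3;q^3)_k^3$ with all $\Phi_d$, $3\nmid d$, cancelled.

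Setting $q=1$ then gives $S_n(1)=p^{3s}A(1)/B(1)$, where $B(1)$ is a nonzero integer. The main obstacle is confirming that $B(1)$ is a $p$-adic unit, i.e.\ that none of the surviving $\Phi_d(1)$ with $3\mid d$ equals $p$. I expect this to hold because such a $d$ cannot be a power of $p$ when $p\ge7$; it is this step that must be checked carefully. Granting it, $B(1)$ is coprime to $p$, so $S_n(1)\equiv0\pmod{p^{3s}}$, which is Corollary~\ref{cor-c}.
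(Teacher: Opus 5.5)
Your proposal is correct and takes the paper's own route: put $n=p^s$ in Theorem~\ref{thm-c} and let $q\to1$. You also supply the denominator bookkeeping the paper leaves implicit; your count showing that only $\Phi_d(q)$ with $3\mid d$ survive is right, and $A(q)\in\mathbb{Z}[q]$ follows because $[n]^3$ is monic. Two small corrections: $p^s\equiv1\pmod 6$ does not force $p\ge7$ (take $5^2=25$), though you only need $n\ge7$ and $p\ne3$; and the step you merely grant is immediate, since for $d>1$ one has $\Phi_d(1)=\ell$ if $d=\ell^a$ and $\Phi_d(1)=1$ otherwise, so $3\mid d$ gives $\Phi_d(1)\in\{1,3\}$ and $B(1)$ is a power of $3$, prime to $p$.
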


In terms of a $_6\phi_5$ summation formula and Guo and Zudilin's
method introduced in \cite{GuoZu}, we are going to prove Theorem
\ref{thm-a} in Section 2. Similarly, the proof of Theorems
\ref{thm-b} and \ref{thm-c} will be provided in Sections 3 and 4,
respectively.

\section{Proof of Theorem \ref{thm-a}}

For the aim to prove Theorem \ref{thm-a}, we require the following
two lemmas.

\begin{lem}\label{lemma-a}
Let $n\geq5$ be an integer coprime with $6$ and let $m$ be a
positive integer such that $m|n$ and $m\equiv5\pmod{6}$. Then,
modulo $[n]\prod_{j=0}^{(n-m)/m}(1-aq^{(3j+1)m})(a-q^{(3j+1)m}),$
\begin{align}
&\sum_{k=0}^{n-1}[6k-1]\frac{(q^{-1},aq^{-1},q^{-1}/a;q^3)_k}{(q^3,q^3/a,aq^{3};q^3)_k}q^{3k}
\equiv0. \label{eq:wei-aa}
\end{align}
\end{lem}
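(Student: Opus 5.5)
The plan is the standard Guo--Zudilin ``creative microscoping'' argument. The polynomials $[n]$, $1-aq^{(3j+1)m}$ and $a-q^{(3j+1)m}$ for $0\le j\le (n-m)/m$ are pairwise coprime in $\mathbb{Z}[a,q]$: the first does not involve $a$, and the others are linear in $a$ with distinct roots $a=q^{\mp(3j+1)m}$. So it suffices to prove \eqref{eq:wei-aa} modulo each factor separately. I will first handle the $a$-dependent factors by an exact evaluation, and then the factor $[n]$.

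\emph{Step 1: the factors $1-aq^{(3j+1)m}$.} Put $a=q^{-(3j+1)m}$. Since $m\equiv 5\pmod 6$, we have $(3j+1)m+1\equiv 0\pmod 3$. Define
\[
N=\frac{(3j+1)m+1}{3},
\]
so that $aq^{-1}=q^{-3N}$ and $(aq^{-1};q^3)_k=0$ for $k>N$. Taking $j\le (n-m)/m$ gives $N\le (3n-2m+1)/3\le n-1$ because $m\ge 5$, so the sum in \eqref{eq:wei-aa} reduces to a terminating sum over $0\le k\le N$. The denominators $(q^3/a,aq^3;q^3)_k$ do not vanish in this range, because $(3j+1)m\not\equiv 0\pmod 3$.

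Next I apply the well-poised ${}_6\phi_5$ summation with base $q^3$, ``$a$'' $=q^{-1}$, $b=q^{-3N}=aq^{-1}$, $c=q^{-1}/a$ and $d=q$. The factor $(q;q^3)_k/(q;q^3)_k$ produced by $d$ cancels, and the argument $q^{-1}q^3/(bcd)$ equals $q^3$. One also has
\[
\frac{1-q^{6k-1}}{1-q^{-1}}\propto[6k-1],
\]
so the left side matches our sum up to a constant factor. The terminating evaluation equals $(q^2,\;q^{-1}q^3/(cd);q^3)_N$ divided by nonzero factors, and $q^{-1}q^3/(cd)=aq^2=q^{3-3N}$. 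The factor $(q^{3-3N};q^3)_N$ contains $1-q^0=0$, so the sum vanishes identically at this value of $a$.

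\emph{Step 2: the factors $a-q^{(3j+1)m}$.} The summand is invariant under $a\mapsto a^{-1}$, so this case follows from Step 1.

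\emph{Step 3: the factor $[n]$.} It suffices to show the sum is $\equiv 0$ modulo $\Phi_d(q)$ for every divisor $d>1$ of $n$. Here $\gcd(d,6)=1$, and $a$ is a free parameter. I would use the Guo--Zudilin symmetry trick: let $k'$ be the index with $3k'\equiv 3k+1$ or similar reflection inside each block of length $d$. Pair $k$ with $\tilde k$ chosen so that $6\tilde k-1\equiv-(6k-1)\pmod d$, and show that the summand at $\tilde k$ is congruent to the negative of the summand at $k$ modulo $\Phi_d(q)$. To do this I will use $q^d\equiv 1$ and the reflection identities $(x;q^3)_{\ell-k}=(x;q^3)_\ell/(q^{3(\ell-k)... })$ for the three numerator and three denominator factors. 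The terms with $(q^3;q^3)_k\equiv 0$ in the denominator need separate treatment: splitting $k=id+r$ factors the sum as a block sum over $0\le r\le d-1$ times a factor independent of the block, so it is enough to show the single block sum is $\equiv 0\pmod{\Phi_d(q)}$.

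I expect Step 3 to be the main obstacle, because the $\Phi_d$-adic valuations of numerator and denominator must be tracked carefully. If the pairing fails, I will instead apply Step 1 with $n$ replaced by $d$, taking $m=d$ when $d\equiv 5\pmod 6$ and using the analogous $a=q^{-(3j+2)d}$ specialization when $d\equiv 1\pmod 6$. This would show that the block sum vanishes at enough values of $a$ to force divisibility by $\Phi_d(q)$.
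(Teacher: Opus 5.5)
Your Steps 1 and 2 are the paper's argument. The paper's specialization $b=q^{-1+(3j+1)m}$, $c=q^{-1-(3j+1)m}$, $d=q$ of the ${}_6\phi_5$ sum in base $q^3$ is exactly your evaluation at $a=q^{\mp(3j+1)m}$, followed by the same coprimality step. Your reduction of Step 3 to a single block $0\le k\le d-1$ is also fine.

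For the factor $[n]$, however, the paper does not argue at all: it quotes Guo and Schlosser's Lemma 2.2. Your self-contained replacement fails for half of the divisors. Write $c_k$ for the summand, let $\zeta$ be a primitive $d$-th root of unity, and let $t_0\in[1,d-1]$ satisfy $3t_0\equiv 1\pmod d$. At $q=\zeta$ only the terms $0\le k\le t_0$ survive, and your pairing is $k\leftrightarrow t_0-k$. Since $\zeta^{3t_0}=\zeta$, you get $(x;q^3)_{t_0}=(x;q^3)_{t_0-k}\,(-x)^k q^{k-3k(k+1)/2}(q^2/x;q^3)_k$ at $q=\zeta$. Applying this to all six parameters, together with $[6(t_0-k)-1]=-q^{1-6k}[6k-1]$, gives $c_{t_0-k}=(-1)^{t_0+1}c_k$.

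For $d\equiv 5\pmod 6$, $t_0=(d+1)/3$ is even and you get the antisymmetry you want. For $d\equiv 1\pmod 6$, $t_0=(2d+1)/3$ is odd, so the terms pair up \emph{symmetrically} and nothing cancels. For example, with $d=7$, $a=1$, $q\to 1$, the summands for $k=0,\dots,5$ are $\equiv -1,5,3,3,5,-1\pmod 7$. Such divisors cannot be avoided in this lemma: $\Phi_{25}(q)\mid[25]$ and $\Phi_7(q)\mid[35]$, both with $m=5$.

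Your fallback does not rescue this. Zeros at $a=q^{\pm(3j+1)d}$ or $a=q^{\pm(3j+2)d}$ only produce the factors $1-aq^{s}$ and $a-q^{s}$. These are coprime to $\Phi_d(q)$, and at $q=\zeta$ they all collapse to $a=1$. So they say nothing about divisibility by $\Phi_d(q)$.

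To close the gap, either cite the Guo--Schlosser lemma as the paper does, or treat $d\equiv 1\pmod 6$ with a second summation. Put $h=(d-1)/3$, so $t_0=2h+1$. By the symmetry, the block sum is $\equiv 2\sum_{k=0}^{h}c_k$. Up to the constant factor $-q$, and termwise modulo $\Phi_d(q)$, this half-sum is a terminating very-well-poised ${}_6\phi_5$ in base $q^3$. Its well-poised parameter is $q^{-(2d+1)}$, which is $\equiv q^{-1}$; take $b=aq^{-1}$ and $c=q^{-1}/a$. The terminating parameter is $q^{1-d}=q^{-3h}$. It is self-paired, so its factors cancel, and it makes the argument $q^{3-d}\equiv q^{3}$. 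The closed form of this ${}_6\phi_5$ contains $(q^{4-2d};q^3)_h$, whose last factor is $1-q^{-d}$. Hence the half-sum, and with it the block sum, vanishes modulo $\Phi_d(q)$ for generic $a$.
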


\begin{proof}
Recall the nonterminating $_6\phi_5$ summation formula (cf.
\cite[Appendix (II.20)]{Gasper}): Under the convergent condition
$|aq/bcd|<1$,
\begin{align}
_{6}\phi_{5}\!\left[\begin{array}{c}
a,\,qa^{\frac{1}{2}},\, -qa^{\frac{1}{2}},\, b,\, c,\, d \\
a^{\frac{1}{2}},\,-a^{\frac{1}{2}},\, aq/b,\, aq/c,\, aq/d
\end{array};q,\,\frac{aq}{bcd}  \right]
=\frac{(aq,aq/bc,aq/bd,aq/cd;q)_{\infty}}{(aq/b,aq/c,aq/d,aq/bcd;q)_{\infty}},
\label{Dixon}
\end{align}
where the basic hypergeometric series $_{r+1}\phi_{r}$ has been
defined by
$$
_{r+1}\phi_{r}\left[\begin{array}{c}
a_1,a_2,\ldots,a_{r+1}\\
b_1,b_2,\ldots,b_{r}
\end{array};q,\, z
\right] =\sum_{k=0}^{\infty}\frac{(a_1,a_2,\ldots, a_{r+1};q)_k}
{(q,b_1,b_2,\ldots,b_{r};q)_k}z^k.
$$

Performing  the replacements $a\mapsto q^{-1}$, $b\mapsto
q^{-1+(3j+1)m}$, $c\mapsto q^{-1-(3j+1)m}$, $d\mapsto q$, $q\mapsto
q^{3}$ in the identity \eqref{Dixon}, we have
\begin{align}
&\sum_{k=0}^{((3j+1)m+1)/3}\frac{1-q^{6k-1}}{1-q^{-1}}\frac{(q^{-1},q^{-1+(3j+1)m},q^{-1-(3j+1)m};q^3)_k}
{(q^{3},q^{3-(3j+1)m},q^{3+(3j+1)m};q^3)_k}q^{3k}
\notag\\[2mm]
&\quad=\frac{(q^{2},q^4,q^{2+(3j+1)m},q^{2-(3j+1)m};q^3)_{\infty}}{(q,q^3,q^{3+(3j+1)m},q^{3-(3j+1)m};q^3)_{\infty}}
\notag\\[2mm]
&\quad=0.\label{Dixon-a}
\end{align}
It is routine to realize the following two relations:
$$\text{When}\:\: k>((3j+1)m+1)/3,\:\:\text{there is}\:\: (q^{-1-(3j+1)m};q^3)_k=0;$$
$$\text{When}\:\: 0\leq j\leq (n-m)/m,\:\:\text{there holds}\:\: ((3j+1)m+1)/3\leq n-1.$$
Hence we discover the following equation:
\begin{align*}
&\sum_{k=0}^{n-1}\frac{1-q^{6k-1}}{1-q^{-1}}\frac{(q^{-1},q^{-1+(3j+1)m},q^{-1-(3j+1)m};q^3)_k}
{(q^{3},q^{3-(3j+1)m},q^{3+(3j+1)m};q^3)_k}q^{3k}=0,
\end{align*}
where $0\leq j\leq (n-m)/m$. Observing that these polynomials
$(1-aq^{m}),(1-aq^{4m}),\ldots,(1-aq^{3n-2m}),(a-q^{m}),(a-q^{4m}),\ldots,(a-q^{3n-2m})$
 are pairwise coprime, we obtain the following
$q$-supercongruence: modulo
$\prod_{j=0}^{(n-m)/m}(1-aq^{(3j+1)m})(a-q^{(3j+1)m}),$
\begin{align}
&\sum_{k=0}^{n-1}[6k-1]\frac{(q^{-1},aq^{-1},q^{-1}/a;q^3)_k}{(q^3,q^3/a,aq^{3};q^3)_k}q^{3k}
\equiv0. \label{eq:wei-bb}
\end{align}
Guo and Schlosser \cite[Lemma 2.2]{GS20} showed that
\eqref{eq:wei-bb} is valid modulo $[n]$. Since the two polynomials
$\prod_{j=0}^{(n-m)/m}(1-aq^{(3j+1)m})(a-q^{(3j+1)m}))$ and $[n]$
are relatively prime, we arrive at \eqref{eq:wei-aa} to complete the
proof.
\end{proof}

Similar to the proof of Lemma \ref{lemma-a}, we can verify the
following lemma.

\begin{lem}\label{lemma-b}
Let $n\geq5$ be an odd integer coprime with $6$ and let $m>1$ be an
integer such that $m|n$ and $m\equiv1\pmod{6}$. Then, modulo
$[n]\prod_{j=0}^{(n-m)/m}(1-aq^{(3j+2)m})(a-q^{(3j+2)m}),$
\begin{align}
&\sum_{k=0}^{n-1}[6k-1]\frac{(q^{-1},aq^{-1},q^{-1}/a;q^3)_k}{(q^3,q^3/a,aq^{3};q^3)_k}q^{3k}
\equiv0. \label{eq:wei-cc}
\end{align}
\end{lem}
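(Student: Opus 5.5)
We follow the proof of Lemma \ref{lemma-a} step by step; the only change is the set of specialisations of $a$. In \eqref{Dixon} we substitute $a\mapsto q^{-1}$, $b\mapsto q^{-1+(3j+2)m}$, $c\mapsto q^{-1-(3j+2)m}$, $d\mapsto q$ and $q\mapsto q^3$.

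\emph{Termination and vanishing.} Since $m\equiv1\pmod 6$, we have $(3j+2)m\equiv 2\pmod 3$, so $(3j+2)m+1\equiv0\pmod 3$. Hence $(q^{-1-(3j+2)m};q^3)_k$ vanishes once $k>((3j+2)m+1)/3$, and the left-hand side becomes a terminating sum. On the right-hand side the product contains the factor $(q^{2-(3j+2)m};q^3)_\infty$. Its exponent $2-(3j+2)m$ is a nonpositive multiple of $3$ (it is $\le 0$ because $m>1$), so this factor is zero. The terminating sum therefore vanishes.

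\emph{Extension of the range.} We must check that $((3j+2)m+1)/3\le n-1$ for $0\le j\le (n-m)/m$. The worst case is $j=(n-m)/m$, where $(3j+2)m=3n-m$, and the bound becomes $(3n-m+1)/3\le n-1$, i.e.\ $m\ge4$. This holds because $m\ge7$. So the sum may be extended to $0\le k\le n-1$ without change.

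\emph{Coprimality and the modulus $[n]$.} The specialised sum is the left-hand side of \eqref{eq:wei-cc} at $a=q^{\pm(3j+2)m}$. The polynomials $1-aq^{(3j+2)m}$ and $a-q^{(3j+2)m}$, for $0\le j\le (n-m)/m$, have pairwise distinct roots $a=q^{\pm(3j+2)m}$, so they are pairwise coprime. This gives the congruence modulo their product.

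Next we need the congruence modulo $[n]$. We would invoke \cite[Lemma 2.2]{GS20} exactly as in the proof of Lemma \ref{lemma-a}. That lemma concerns $[n]$ and the general-$a$ sum, and does not depend on $m$.

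\emph{Assembling.} Finally, $[n]$ is coprime to the product, because $a$-linear factors of this kind do not share factors with a polynomial in $q$ alone. Combining the two congruences gives \eqref{eq:wei-cc}.

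The only genuinely new check is the range inequality, and that is where the hypothesis $m>1$ is needed; every other step carries over verbatim from Lemma \ref{lemma-a}.
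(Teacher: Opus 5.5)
Your proof is correct and follows the paper's route, which simply says Lemma~\ref{lemma-b} is proved ``similarly'' to Lemma~\ref{lemma-a}. You use the same specialisation of \eqref{Dixon} with $(3j+1)m$ replaced by $(3j+2)m$, vanishing via $(q^{2-(3j+2)m};q^3)_\infty$, the range check $(3n-m+1)/3\le n-1$, coprimality of the linear factors in $a$, and \cite[Lemma 2.2]{GS20} for $[n]$. One small point: $2-(3j+2)m\le 0$ already holds for $m=1$, so, as you say at the end, $m>1$ is genuinely needed only in the range check.
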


Now we are ready to prove Theorem \ref{thm-a}.

\begin{proof}[Proof of Theorem \ref{thm-a}]
Because $m|n$ and $m\equiv5\pmod{6}$, the $a=1$ case of the
expression $[n]\prod_{j=0}^{(n-m)/m}(1-aq^{(3j+1)m})(a-q^{(3j+1)m})$
has the factor
 $$\prod_{m|n, m\equiv5{\!\!\!\!\!}\pmod{6}}\Phi_m(q)^{(2n+m)/m},$$
 while the $a=1$ case of the expression $(q^3/a,aq^{3};q^3)_{n-1}$,
 which is in the denominator of the left hand side of
 \eqref{eq:wei-aa}, only contains the factor
  $$\prod_{m|n, m\equiv5{\!\!\!\!\!}\pmod{6}}\Phi_m(q)^{(2n-2m)/m}.$$

 Because  $m|n$, $m>1$, and $m\equiv1\pmod{6}$, the $a=1$ case of the
expression $[n]\prod_{j=0}^{(n-m)/m}(1-aq^{(3j+2)m})(a-q^{(3j+2)m})$
has the factor
 $$\prod_{m|n, m>1,m\equiv1{\!\!\!\!\!}\pmod{6}}\Phi_m(q)^{(2n+m)/m},$$
 whereas the $a=1$ case of the expression $(q^3/a,aq^{3};q^3)_{n-1}$,
 which is in the denominator of the left hand side of
 \eqref{eq:wei-cc}, merely contains the factor
  $$\prod_{m|n,m>1, m\equiv1{\!\!\!\!\!}\pmod{6}}\Phi_m(q)^{(2n-2m)/m}.$$
Therefore, \eqref{eq:wei-a} is correct modulo
$$\prod_{m|n, m\equiv5{\!\!\!\!\!}\pmod{6}}\Phi_m(q)^{3}\prod_{m|n, m>1,m\equiv1{\!\!\!\!\!}\pmod{6}}\Phi_m(q)^{3}
=\prod_{m|n, m>1,}\Phi_m(q)^{3}=[n]^3.$$
\end{proof}

\section{Proof of Theorem \ref{thm-b}}

For the purpose of proving Theorem \ref{thm-b}, we need the
following two lemmas.

\begin{lem}\label{lemma-c}
Let $n$ be a positive integer such that $n\equiv 5\pmod 6$ and let
$m$ be a positive integer subject to $m|n$ and $m\equiv5\pmod{6}$.
Then, modulo
$[n]\prod_{j=0}^{(n-m)/3m}(1-aq^{(3j+1)m})(a-q^{(3j+1)m}),$
\begin{align}
&\sum_{k=0}^{(n+1)/3}[6k-1]\frac{(q^{-1},aq^{-1},q^{-1}/a;q^3)_k}{(q^3,q^3/a,aq^{3};q^3)_k}q^{3k}
\equiv0. \label{eq:wei-dd}
\end{align}
\end{lem}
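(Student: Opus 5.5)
We follow the same route as in the proof of Lemma \ref{lemma-a}; only the range of $j$ and the truncation point change. In \eqref{Dixon} we make the substitutions $a\mapsto q^{-1}$, $b\mapsto q^{-1+(3j+1)m}$, $c\mapsto q^{-1-(3j+1)m}$, $d\mapsto q$, $q\mapsto q^3$. As in \eqref{Dixon-a}, this gives a terminating sum whose right-hand side contains the factor $(q^{2-(3j+1)m};q^3)_\infty$. Since $m\equiv 5\pmod 6$, we have $(3j+1)m\equiv 2\pmod 3$, so $2-(3j+1)m\le 0$ is divisible by $3$ and this factor vanishes. The left-hand side terminates at $k=((3j+1)m+1)/3$, because $(q^{-1-(3j+1)m};q^3)_k=0$ for larger $k$.

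The one new point is to check that this termination index does not exceed the new truncation point $(n+1)/3$. For $0\le j\le (n-m)/(3m)$ we have $(3j+1)m\le n$, hence $((3j+1)m+1)/3\le (n+1)/3$. The range of $j$ makes sense because $n/m\equiv 1\pmod 6$, so $(n-m)/(3m)$ is an integer. Consequently the sum with upper limit $(n+1)/3$ equals $0$ for each such $j$. This is exactly the $a=q^{(3j+1)m}$ and $a=q^{-(3j+1)m}$ specialization of the left-hand side of \eqref{eq:wei-dd}.

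Next we note that the polynomials $1-aq^{(3j+1)m}$ and $a-q^{(3j+1)m}$, for $0\le j\le (n-m)/(3m)$, are pairwise coprime in $a$, since their roots $a=q^{\mp(3j+1)m}$ are distinct. It follows that \eqref{eq:wei-dd} holds modulo $\prod_{j}(1-aq^{(3j+1)m})(a-q^{(3j+1)m})$. We should also check that the denominators $(q^3/a,aq^3;q^3)_k$ for $k\le (n+1)/3$ do not vanish at these specializations. This holds because $3i\ne (3j+1)m$ for every integer $i$, as $3\nmid(3j+1)m$.

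It remains to obtain the congruence modulo $[n]$. This is the step that needs the most care, because the cited result of Guo and Schlosser \cite[Lemma 2.2]{GS20} is stated for the sum up to $n-1$. Our plan is to show that the terms with $(n+1)/3<k\le n-1$ vanish modulo $\Phi_d(q)$ for each divisor $d>1$ of $n$; the truncated sum then agrees with the full sum modulo $[n]$.

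Fix such a $d$ and write $\zeta$ for a primitive $d$-th root of unity.
\begin{itemize}
\item If $d\equiv 5\pmod 6$, then $(q^{-1};q^3)_k$ acquires a factor $1-q^{d}$ once $k>(d+1)/3$. The denominator $(q^3;q^3)_k$ acquires its first $\Phi_d$ only for $k\ge d$.
\item If $d\equiv 1\pmod 6$, the analogous threshold is $k>(2d+1)/3$.
\end{itemize}
This treats each $\Phi_d$ block separately. Alternatively, we could directly reuse the $q$-Chu–Vandermonde style pairing $k\leftrightarrow$ (something)$-k$ from \cite{GS20} with the shorter range: one checks that the summand at $q=\zeta$ satisfies a symmetry $k\mapsto (d+1)/3 - k$ (up to sign) modulo $\Phi_d(q)$, so that partial sums over complete blocks of length $d$, together with the partial block up to $(n+1)/3$, vanish.

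Once the congruence modulo $[n]$ is established, we conclude as before. The polynomial $[n]$ is coprime to $\prod_j(1-aq^{(3j+1)m})(a-q^{(3j+1)m})$, so \eqref{eq:wei-dd} holds modulo their product.
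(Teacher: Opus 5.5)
Your specialization of \eqref{Dixon}, the check $((3j+1)m+1)/3\le(n+1)/3$ for $0\le j\le(n-m)/3m$, and the coprimality step are exactly the paper's argument. At the modulus-$[n]$ step the paper simply invokes \cite[Lemma 2.2]{GS20}. You instead try to deduce the truncated congruence from the one for $\sum_{k=0}^{n-1}$, and that deduction fails as written.

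Your claim is that each term with $(n+1)/3<k\le n-1$ vanishes modulo $\Phi_d(q)$. This holds for $d=n$, but it is false for proper divisors $d$ of $n$. Once $k\ge d$, the denominator $(q^3;q^3)_k$ contributes $\lfloor k/d\rfloor$ factors $\Phi_d(q)$. These cancel the factors of $(q^{-1};q^3)_k$ whenever $k \bmod d\le M_d$, where $M_d=(d+1)/3$ if $d\equiv5\pmod6$ and $M_d=(2d+1)/3$ if $d\equiv1\pmod6$. For example, take $n=35$, $d=5$, $k=15$. Then $(q^{-1};q^3)_{15}$ contains $1-q^{5},1-q^{20},1-q^{35}$, while $(q^3;q^3)_{15}$ contains $1-q^{15},1-q^{30},1-q^{45}$. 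Since $[89]$ is prime to $\Phi_5(q)$, this term does not vanish at a primitive fifth root of unity. Your bullets only cover $k<d$.

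The fallback symmetry is not right either. For $d\equiv1\pmod6$ the reflection point has to be $M_d=(2d+1)/3$, not $(d+1)/3$. A direct computation then gives $c_\zeta(M_d-k)=+c_\zeta(k)$; the sign is $-$ only for $d\equiv5\pmod6$. So pairing alone produces no cancellation in the case $d\equiv1\pmod6$.

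What closes the gap is a block argument. Let $\zeta$ be a primitive $d$-th root of unity with $d\mid n$, $d>1$. Let $c_q(k)$ denote the $k$-th summand and $c_\zeta(k)$ its value at $q=\zeta$.
\begin{itemize}
\item Since $n\equiv5\pmod6$, one checks $(n+1)/3=Ld+M_d$ for some integer $L\ge0$.
\item For $0\le k<d$ one has $c_\zeta(\ell d+k)=\lambda_\ell\,c_\zeta(k)$. Here $\lambda_\ell$ is the value at $\zeta$ of $\frac{(q^{-1},aq^{-1},q^{-1}/a;q^3)_{\ell d}}{(q^3,q^3/a,aq^3;q^3)_{\ell d}}q^{3\ell d}$, which is regular because numerator and denominator each contain exactly $\ell$ factors $\Phi_d(q)$.
\item Moreover $c_\zeta(k)=0$ for $M_d<k<d$.
\end{itemize}
Hence the truncated sum at $\zeta$ equals $(\lambda_0+\dots+\lambda_L)\sum_{k=0}^{d-1}c_\zeta(k)$. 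The last sum vanishes by the case $n=d$ of the full-range congruence you intended to quote from \cite{GS20}.

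If you want that core fact without the citation, argue by cases:
\begin{itemize}
\item For $d\equiv5\pmod6$, the sign-reversing pairing $k\leftrightarrow M_d-k$ suffices.
\item For $d\equiv1\pmod6$, apply \eqref{Dixon} with $q\mapsto q^3$, first parameter $q^{-1}$, and remaining parameters $aq^{-1}$, $q^{-1}/a$, $q^{1-d}$. The series terminates at $k=(d-1)/3$, and its right-hand side contains the factor $1-q^{d}$. Reducing modulo $\Phi_d(q)$ gives $\sum_{k=0}^{(d-1)/3}c_q(k)\equiv0\pmod{\Phi_d(q)}$. Combine this with the sign-preserving pairing on $\{0,\dots,M_d\}=\{0,\dots,(d-1)/3\}\cup\{(d+2)/3,\dots,M_d\}$.
\end{itemize}
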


\begin{proof}
It is not difficult to understand the two facts:
$$\text{When}\:\: k>((3j+1)m+1)/3,\:\:\text{there is}\:\: (q^{-1-(3j+1)m};q^3)_k=0;$$
$$\text{When}\:\: 0\leq j\leq (n-m)/3m,\:\:\text{there holds}\:\: ((3j+1)m+1)/3\leq (n+1)/3.$$
According to them and \eqref{Dixon-a}, we find the following
equation:
\begin{align*}
&\sum_{k=0}^{(n+1)/3}\frac{1-q^{6k-1}}{1-q^{-1}}\frac{(q^{-1},q^{-1+(3j+1)m},q^{-1-(3j+1)m};q^3)_k}
{(q^{3},q^{3-(3j+1)m},q^{3+(3j+1)m};q^3)_k}q^{3k}=0,
\end{align*}
where $0\leq j\leq (n-m)/3m$. Noting that these polynomials
$(1-aq^{m}),(1-aq^{4m}),\ldots,(1-aq^{n}),(a-q^{m}),(a-q^{4m}),\ldots,(a-q^{n})$
 are pairwise coprime, we get the following
$q$-supercongruence: modulo
$\prod_{j=0}^{(n-m)/3m}(1-aq^{(3j+1)m})(a-q^{(3j+1)m}),$
\begin{align}
&\sum_{k=0}^{(n+1)/3}[6k-1]\frac{(q^{-1},aq^{-1},q^{-1}/a;q^3)_k}{(q^3,q^3/a,aq^{3};q^3)_k}q^{3k}
\equiv0. \label{eq:wei-ee}
\end{align}
Guo and Schlosser \cite[Lemma 2.2]{GS20} showed that
\eqref{eq:wei-ee} is valid modulo $[n]$. Since the two polynomials
$\prod_{j=0}^{(n-m)/3m}(1-aq^{(3j+1)m})(a-q^{(3j+1)m}))$ and $[n]$
are relatively prime, we are led to \eqref{eq:wei-dd} to complete
the proof.
\end{proof}

Similar to the proof of Lemma \ref{lemma-c}, we can deduce the
following lemma.

\begin{lem}\label{lemma-d}
Let $n$ be a positive integer such that $n\equiv 5\pmod 6$ and let
$m>1$ be an integer such that $m|n$ and $m\equiv1\pmod{6}$. Then,
modulo $[n]\prod_{j=0}^{(n-2m)/3m}(1-aq^{(3j+2)m})(a-q^{(3j+2)m}),$
\begin{align}
&\sum_{k=0}^{(n+1)/3}[6k-1]\frac{(q^{-1},aq^{-1},q^{-1}/a;q^3)_k}{(q^3,q^3/a,aq^{3};q^3)_k}q^{3k}
\equiv0. \label{eq:wei-ff}
\end{align}
\end{lem}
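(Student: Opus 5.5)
The plan follows the proof of Lemma \ref{lemma-c}, with $(3j+1)m$ replaced by $N_j:=(3j+2)m$. The key point is that $m\equiv1\pmod 6$ gives $N_j\equiv 2\pmod 3$, which makes the relevant shifted factorials terminate and the right-hand side of \eqref{Dixon} vanish.

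\emph{Step 1 (a terminating identity).} In \eqref{Dixon} I make the replacements $a\mapsto q^{-1}$, $b\mapsto q^{-1+N_j}$, $c\mapsto q^{-1-N_j}$, $d\mapsto q$, $q\mapsto q^3$. Since $N_j\equiv2\pmod 3$, we have $-1-N_j\equiv0\pmod 3$. Hence $(q^{-1-N_j};q^3)_k=0$ for $k>(N_j+1)/3$, and the series terminates at $k=(N_j+1)/3$.

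On the right-hand side we get
$$\frac{(q^{2},q^4,q^{2+N_j},q^{2-N_j};q^3)_{\infty}}{(q,q^3,q^{3+N_j},q^{3-N_j};q^3)_{\infty}}.$$
The numerator contains $(q^{2-N_j};q^3)_\infty$. Because $2-N_j\equiv0\pmod 3$ and $2-N_j\le0$, this product contains the factor $1-q^0=0$. In the denominator, $3-N_j\equiv1\pmod3$, so no factor vanishes. The same congruence shows that $(q^{3-N_j};q^3)_k$ never vanishes on the left. So the identity is legitimate and gives
$$\sum_{k=0}^{(N_j+1)/3}\frac{1-q^{6k-1}}{1-q^{-1}}\frac{(q^{-1},q^{-1+N_j},q^{-1-N_j};q^3)_k}{(q^{3},q^{3-N_j},q^{3+N_j};q^3)_k}q^{3k}=0.$$

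\emph{Step 2 (range of summation).} The condition $(N_j+1)/3\le (n+1)/3$ is equivalent to $(3j+2)m\le n$, that is, $j\le (n-2m)/3m$. This bound is an integer, since $n/m\equiv5\pmod 6$. Combined with the vanishing of the terms beyond $(N_j+1)/3$, the sum truncated at $(n+1)/3$ is $0$ for every $0\le j\le (n-2m)/3m$.

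This says that the left-hand side of \eqref{eq:wei-ff} vanishes at $a=q^{-N_j}$ and at $a=q^{N_j}$; the sum is invariant under $a\mapsto 1/a$. One should also note that the denominators $(q^3/a,aq^3;q^3)_k$ do not vanish at these values of $a$ for $k\le (n+1)/3$.

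\emph{Step 3 (assembling the modulus).} The polynomials $1-aq^{N_j}$ and $a-q^{N_j}$, for $0\le j\le(n-2m)/3m$, are pairwise coprime because the exponents $N_j$ are distinct. Therefore the sum is $\equiv0$ modulo their product. Guo and Schlosser \cite[Lemma 2.2]{GS20} gives the congruence modulo $[n]$. Since $[n]$ is coprime to this product as a polynomial in $a$, \eqref{eq:wei-ff} follows.

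The only real obstacle is the residue bookkeeping in Step 1. One has to confirm that the zero on the right-hand side comes from $q^{2-N_j}$ and not from the denominator, and that the upper limit $(N_j+1)/3$ is an integer not exceeding $(n+1)/3$. Both follow from $m\equiv1\pmod 6$ and $n\equiv5\pmod 6$.
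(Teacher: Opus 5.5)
Your proposal is correct and is essentially the paper's proof. The paper obtains this lemma by rerunning the argument of Lemma~\ref{lemma-c} with $(3j+1)m$ replaced by $(3j+2)m$: the terminating case of \eqref{Dixon} at $a=q^{\pm(3j+2)m}$, truncation at $(n+1)/3$ for $0\le j\le (n-2m)/3m$, pairwise coprimality of the linear factors, and Guo--Schlosser for the factor $[n]$. Your check that $(3j+2)m\equiv 2\pmod 3$, so the zero comes from $(q^{2-N_j};q^3)_\infty$ while $(q^{3-N_j};q^3)_k$ never vanishes, supplies details the paper leaves implicit.
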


Now we begin to prove Theorem \ref{thm-b}.

\begin{proof}[Proof of Theorem \ref{thm-b}]
On account of $m|n$ and $m\equiv5\pmod{6}$, the $a=1$ case of the
product $[n]\prod_{j=0}^{(n-m)/3m}(1-aq^{(3j+1)m})(a-q^{(3j+1)m})$
owns the factor
 $$\prod_{m|n, m\equiv5{\!\!\!\!\!}\pmod{6}}\Phi_m(q)^{(2n+7m)/3m},$$
 while the $a=1$ case of the $q$-shifted factorials $(q^3/a,aq^{3};q^3)_{(n+1)/3}$,
 which appears in the denominator of the left hand side of
 \eqref{eq:wei-dd}, only implies the factor
  $$\prod_{m|n, m\equiv5{\!\!\!\!\!}\pmod{6}}\Phi_m(q)^{(2n-2m)/3m}.$$

 On account of  $m|n$, $m>1$, and $m\equiv1\pmod{6}$, the $a=1$ case of the
product $[n]\prod_{j=0}^{(n-2m)/3m}(1-aq^{(3j+2)m})(a-q^{(3j+2)m})$
owns the factor
 $$\prod_{m|n, m>1,m\equiv1{\!\!\!\!\!}\pmod{6}}\Phi_m(q)^{(2n+5m)/3m},$$
 whereas the $a=1$ case of the $q$-shifted factorials $(q^3/a,aq^{3};q^3)_{(n+1)/3}$,
 which appears in the denominator of the left hand side of
 \eqref{eq:wei-ff}, merely implies the factor
  $$\prod_{m|n,m>1, m\equiv1{\!\!\!\!\!}\pmod{6}}\Phi_m(q)^{(2n-4m)/3m}.$$
So \eqref{eq:wei-b} is right modulo
$$\prod_{m|n, m\equiv5{\!\!\!\!\!}\pmod{6}}\Phi_m(q)^{3}\prod_{m|n, m>1,m\equiv1{\!\!\!\!\!}\pmod{6}}\Phi_m(q)^{3}
=\prod_{m|n, m>1,}\Phi_m(q)^{3}=[n]^3.$$
\end{proof}

\section{Proof of Theorem \ref{thm-c}}

In order to prove Theorem \ref{thm-c}, we demand the following two
lemmas.

\begin{lem}\label{lemma-e}
Let $n\geq7$ be an integer such that $n\equiv 1\pmod 6$ and let $m$
be a positive integer subject to $m|n$ and $m\equiv5\pmod{6}$. Then,
modulo $[n]\prod_{j=0}^{(2n-m)/3m}(1-aq^{(3j+1)m})(a-q^{(3j+1)m}),$
\begin{align}
&\sum_{k=0}^{(2n+1)/3}[6k-1]\frac{(q^{-1},aq^{-1},q^{-1}/a;q^3)_k}{(q^3,q^3/a,aq^{3};q^3)_k}q^{3k}
\equiv0. \label{eq:wei-gg}
\end{align}
\end{lem}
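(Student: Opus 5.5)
The plan follows the proofs of Lemmas \ref{lemma-a} and \ref{lemma-c}. The polynomial $\prod_{j}(1-aq^{(3j+1)m})(a-q^{(3j+1)m})$ is coprime to $[n]$ as a polynomial in $a$, so it suffices to prove the congruence modulo each of the two factors separately.

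\emph{Step 1: the evaluations at $a=q^{\pm(3j+1)m}$.} Fix $j$ with $0\le j\le (2n-m)/3m$. Substituting $a=q^{-(3j+1)m}$, or equivalently $a=q^{(3j+1)m}$ (the summand is symmetric under $a\mapsto 1/a$), the summand becomes exactly the one in \eqref{Dixon-a}. Since $m\equiv 5\pmod 6$, the number $(3j+1)m+1$ is divisible by $3$. Hence the factor $(q^{-1-(3j+1)m};q^3)_k$ vanishes for $k>((3j+1)m+1)/3$, and \eqref{Dixon-a} shows that the terminated sum equals $0$.

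We must also check that the range of summation in \eqref{eq:wei-gg} covers all the nonzero terms. For $j\le (2n-m)/3m$ we have $(3j+1)m\le 2n$, so $((3j+1)m+1)/3\le (2n+1)/3$. Therefore the sum up to $(2n+1)/3$ vanishes at every such $a$.

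We should also confirm that no denominator vanishes for $k\le (2n+1)/3$. The factor $(q^{3-(3j+1)m};q^3)_k$ would contain $1-q^0$ only if $3i=(3j+1)m-3$ for some $i$, which is impossible since $3\nmid (3j+1)m$. This is the same verification already implicit in \eqref{Dixon-a}.

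\emph{Step 2: combining the linear factors.} The polynomials $1-aq^{(3j+1)m}$ and $a-q^{(3j+1)m}$ are pairwise coprime for distinct $j$, because their roots $q^{-(3j+1)m}$ and $q^{(3j+1)m}$ are all distinct. So the sum is congruent to $0$ modulo their product, exactly as in \eqref{eq:wei-ee}.

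\emph{Step 3: the factor $[n]$.} Next I would invoke Guo and Schlosser \cite[Lemma 2.2]{GS20} to get the congruence modulo $[n]$, i.e. modulo $\Phi_d(q)$ for each divisor $d>1$ of $n$.

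This is the main obstacle. The truncation point $(2n+1)/3$ differs from the truncations used in the previous lemmas. The natural approach is the standard pairing $k\leftrightarrow k'$ modulo $\Phi_d(q)$, under which the summands cancel in pairs. Since $n\equiv 1\pmod 6$, the value $(2n+1)/3$ is an integer, and one expects the terms with $(2n+1)/3<k\le n-1$ to vanish modulo $\Phi_n(q)$. The reason is that $(q^{-1};q^3)_k$ contains the factor $1-q^{2n}$ once $3k-1\ge 2n$, i.e. once $k>(2n+1)/3$.

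That vanishing alone handles only $d=n$. For a general divisor $d$ I would instead apply \cite[Lemma 2.2]{GS20} in its truncated form, or rerun its pairing argument: for each $d\mid n$, split the sum into blocks of length $d$ and use $q^d\equiv1$ to show that each block sum vanishes. The leftover partial block should be controlled because $(2n+1)/3$ is of the form $\ell d+(2d+1)/3$ or $\ell d+(d+1)/3$, according to $d\equiv 1$ or $5\pmod 6$, and these are exactly the cutoffs treated in Lemmas \ref{lemma-c} and \ref{lemma-a}-type arguments.

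\emph{Step 4: conclusion.} Finally, the product $\prod_{j}(1-aq^{(3j+1)m})(a-q^{(3j+1)m})$ is coprime to $[n]$ as a polynomial in $a$, so the two congruences combine and yield \eqref{eq:wei-gg}.
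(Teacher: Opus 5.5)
Your proposal is correct and follows the paper's proof essentially step for step. It evaluates the sum at $a=q^{\pm(3j+1)m}$ via the terminating sum \eqref{Dixon-a}, checks that $((3j+1)m+1)/3\le(2n+1)/3$ for $0\le j\le(2n-m)/3m$, uses the pairwise coprimality of the linear factors, and appeals to Guo and Schlosser's Lemma 2.2 for the factor $[n]$. Your extra points are sound justifications of details the paper leaves to the citation: the block decomposition modulo $\Phi_d(q)$, with $(2n+1)/3=\ell d+(2d+1)/3$ or $\ell d+(d+1)/3$ so that the last partial block ends exactly where the remaining terms vanish, and your check that no denominator vanishes at $a=q^{\pm(3j+1)m}$.
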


\begin{proof}
It is ordinary to know the following two formulas:
$$\text{When}\:\: k>((3j+1)m+1)/3,\:\:\text{there is}\:\: (q^{-1-(3j+1)m};q^3)_k=0;$$
$$\text{When}\:\: 0\leq j\leq (2n-m)/3m,\:\:\text{there holds}\:\: ((3j+1)m+1)/3\leq (2n+1)/3.$$
By means of them and \eqref{Dixon-a}, we can catch hold of the
following equation:
\begin{align*}
&\sum_{k=0}^{(2n+1)/3}\frac{1-q^{6k-1}}{1-q^{-1}}\frac{(q^{-1},q^{-1+(3j+1)m},q^{-1-(3j+1)m};q^3)_k}
{(q^{3},q^{3-(3j+1)m},q^{3+(3j+1)m};q^3)_k}q^{3k}=0,
\end{align*}
where $0\leq j\leq (2n-m)/3m$. Noticing that these polynomials
$(1-aq^{m}),(1-aq^{4m}),\ldots,(1-aq^{2n}),(a-q^{m}),(a-q^{4m}),\ldots,(a-q^{2n})$
 are pairwise coprime, we obtain the following
$q$-supercongruence: modulo
$\prod_{j=0}^{(2n-m)/3m}(1-aq^{(3j+1)m})(a-q^{(3j+1)m}),$
\begin{align}
&\sum_{k=0}^{(2n+1)/3}[6k-1]\frac{(q^{-1},aq^{-1},q^{-1}/a;q^3)_k}{(q^3,q^3/a,aq^{3};q^3)_k}q^{3k}
\equiv0. \label{eq:wei-hh}
\end{align}
Guo and Schlosser \cite[Lemma 2.2]{GS20} showed that
\eqref{eq:wei-hh} is valid modulo $[n]$. Since the two polynomials
$\prod_{j=0}^{(2n-m)/3m}(1-aq^{(3j+1)m})(a-q^{(3j+1)m}))$ and $[n]$
are relatively prime, we get \eqref{eq:wei-gg} to complete the
proof.
\end{proof}

Similar to the proof of Lemma \ref{lemma-c}, we can derive the
following lemma.

\begin{lem}\label{lemma-f}
Let $n\geq7$ be an integer such that $n\equiv 1\pmod 6$ and let
$m>1$ be an integer such that $m|n$ and $m\equiv1\pmod{6}$. Then,
modulo $[n]\prod_{j=0}^{(2n-2m)/3m}(1-aq^{(3j+2)m})(a-q^{(3j+2)m}),$
\begin{align}
&\sum_{k=0}^{(2n+1)/3}[6k-1]\frac{(q^{-1},aq^{-1},q^{-1}/a;q^3)_k}{(q^3,q^3/a,aq^{3};q^3)_k}q^{3k}
\equiv0. \label{eq:wei-ii}
\end{align}
\end{lem}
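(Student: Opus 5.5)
We follow the proof of Lemma \ref{lemma-e}, but now the relevant parameter specialization is $b\mapsto q^{-1+(3j+2)m}$, $c\mapsto q^{-1-(3j+2)m}$ (with $a\mapsto q^{-1}$, $d\mapsto q$, $q\mapsto q^3$) in \eqref{Dixon}. Since $m\equiv1\pmod 6$, we have $(3j+2)m\equiv 2\pmod 3$, so $1+(3j+2)m\equiv0\pmod 3$. Hence $(q^{-1-(3j+2)m};q^3)_k$ vanishes for $k>(1+(3j+2)m)/3$, and the series on the left terminates naturally at $k=(1+(3j+2)m)/3$.

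The right-hand side of the specialized identity is
\[
\frac{(q^{2},q^4,q^{2+(3j+2)m},q^{2-(3j+2)m};q^3)_{\infty}}{(q,q^3,q^{3+(3j+2)m},q^{3-(3j+2)m};q^3)_{\infty}}.
\]
It vanishes because the numerator contains $(q^{2-(3j+2)m};q^3)_\infty$. Indeed, $2-(3j+2)m\equiv0\pmod 3$ and this exponent is $\le0$, so a factor $1-q^0$ occurs. We must check that the denominator has no zero factor. Its exponents $3\pm(3j+2)m$ are $\equiv 2\pmod 3$, so no zero factor arises there, and $(q;q^3)_\infty$, $(q^3;q^3)_\infty$ are harmless. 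On the left, the denominator factor $(q^{3-(3j+2)m};q^3)_k$ with $k\le(1+(3j+2)m)/3$ also never hits exponent $0$, by the same residue argument. So the left sum is $0$.

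Next we check the range. For $0\le j\le(2n-2m)/3m$ we get $(1+(3j+2)m)/3\le(1+2n)/3$, so each such terminating sum equals the truncated sum up to $(2n+1)/3$. Here $(2n-2m)/3m$ is an integer since $n\equiv m\equiv1\pmod 6$ gives $n/m\equiv1\pmod 3$. Thus the left-hand side of \eqref{eq:wei-ii} vanishes at $a=q^{\pm(3j+2)m}$ for all these $j$.

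The factors $1-aq^{(3j+2)m}$ and $a-q^{(3j+2)m}$ for $j=0,\dots,(2n-2m)/3m$ are pairwise coprime polynomials in $a$, since their roots $a=q^{\mp(3j+2)m}$ are distinct. This gives the congruence modulo their product. Then we invoke Guo and Schlosser \cite[Lemma 2.2]{GS20} for the congruence modulo $[n]$, as in the earlier lemmas; the truncation at $(2n+1)/3$ is covered just as in Lemma \ref{lemma-e}. Finally, $[n]$ is coprime to the product as polynomials in $a$, which yields \eqref{eq:wei-ii}.

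The main point needing care is verifying that no denominator factor vanishes at these specializations, including the $q$-integer $[6k-1]$ normalization. We handle this by the residue-mod-3 check above.
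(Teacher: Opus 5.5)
Your proposal is correct and follows the paper's intended route, the analogue of Lemma \ref{lemma-c}. You specialize \eqref{Dixon} with $b\mapsto q^{-1+(3j+2)m}$, $c\mapsto q^{-1-(3j+2)m}$, get a terminating sum whose right-hand side vanishes through $(q^{2-(3j+2)m};q^3)_\infty$, check the range $0\le j\le (2n-2m)/3m$, combine the pairwise coprime factors, and cite Guo--Schlosser for the modulus $[n]$. One harmless slip: $3-(3j+2)m\equiv 1\pmod 3$, not $2$, but it is still nonzero modulo $3$, so no denominator factor vanishes.
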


Now we start to prove Theorem \ref{thm-c}.

\begin{proof}[Proof of Theorem \ref{thm-c}]
Considering $m|n$ and $m\equiv5\pmod{6}$, the $a=1$ case of the
expression
$[n]\prod_{j=0}^{(2n-m)/3m}(1-aq^{(3j+1)m})(a-q^{(3j+1)m})$ has the
factor
 $$\prod_{m|n, m\equiv5{\!\!\!\!\!}\pmod{6}}\Phi_m(q)^{(4n+7m)/3m},$$
 while the $a=1$ case of the expression $(q^3/a,aq^{3};q^3)_{(2n+1)/3}$,
 which is in the denominator of the left hand side of
 \eqref{eq:wei-aa}, only implies the factor
  $$\prod_{m|n, m\equiv5{\!\!\!\!\!}\pmod{6}}\Phi_m(q)^{(4n-2m)/3m}.$$

 Considering  $m|n$, $m>1$, and $m\equiv1\pmod{6}$, the $a=1$ case of the
product $[n]\prod_{j=0}^{(2n-2m)/3m}(1-aq^{(3j+2)m})(a-q^{(3j+2)m})$
has the factor
 $$\prod_{m|n, m>1,m\equiv1{\!\!\!\!\!}\pmod{6}}\Phi_m(q)^{(4n+5m)/3m},$$
 whereas the $a=1$ case of the $q$-shifted factorials $(q^3/a,aq^{3};q^3)_{(2n+1)/3}$,
 which emerges in the denominator of the left hand side of
 \eqref{eq:wei-cc}, merely implies the factor
  $$\prod_{m|n,m>1, m\equiv1{\!\!\!\!\!}\pmod{6}}\Phi_m(q)^{(4n-4m)/3m}.$$
Thus \eqref{eq:wei-c} is true modulo
$$\prod_{m|n, m\equiv5{\!\!\!\!\!}\pmod{6}}\Phi_m(q)^{3}\prod_{m|n, m>1,m\equiv1{\!\!\!\!\!}\pmod{6}}\Phi_m(q)^{3}
=\prod_{m|n, m>1,}\Phi_m(q)^{3}=[n]^3.$$
\end{proof}

{\bf{Acknowledgments}}\\

The work is supported by the National Natural Science Foundation of
China (No. 12571349).


\end{document}